\documentclass[10pt]{article}
\textwidth= 5.00in
\textheight= 7.4in
\topmargin = 30pt
\evensidemargin=0pt
\oddsidemargin=55pt
\headsep=17pt
\parskip=.5pt
\parindent=12pt
\font\smallit=cmti10

\usepackage{amssymb,latexsym,amsmath,epsfig,amsthm,xcolor} 
\usepackage{dirtytalk}
\usepackage{tikz}
\usepackage{cleveref}
\usepackage{url}
\makeatletter

\renewcommand\section{\@startsection {section}{1}{\z@}
{-30pt \@plus -1ex \@minus -.2ex}
{2.3ex \@plus.2ex}
{\normalfont\normalsize\bfseries\boldmath}}

\renewcommand\subsection{\@startsection{subsection}{2}{\z@}
{-3.25ex\@plus -1ex \@minus -.2ex}
{1.5ex \@plus .2ex}
{\normalfont\normalsize\bfseries\boldmath}}

\renewcommand{\@seccntformat}[1]{\csname the#1\endcsname. }

\makeatother

\newtheorem{theorem}{Theorem}
\newtheorem{lemma}{Lemma}
\newtheorem{proposition}{Proposition}
\newtheorem{corollary}{Corollary}

\theoremstyle{definition}
\newtheorem{definition}{Definition}

\newtheorem{example}{Example}


\begin{document}
\begin{figure}
\vspace*{-62pt}
\vspace{.75in}
\end{figure}

\begin{center}
\uppercase{\bf Fast winning strategies in a generalized van der Waerden game}
\vskip 20pt
{\bf Hannah Alpert\footnote{Supported by Simons Foundation grant No. 965348.}}\\
{\smallit Department of Mathematics and Statistics, Auburn University, Auburn, AL, USA}\\
{\tt hca0013@auburn.edu}\\ 
\vskip 10pt
{\bf Liam Barham}\\
{\smallit Department of Mathematics and Statistics, Auburn University, Auburn, AL, USA}\\
{\tt blb0081@auburn.edu}\\ 
\vskip 10pt
{\bf Brian Freidin}\\
{\smallit Department of Mathematics and Statistics, Auburn University, Auburn, AL, USA}\\
{\tt bgf0012@auburn.edu}\\ 
\vskip 10pt
{\bf Ian Tan}\\
{\smallit Department of Mathematics and Statistics, Auburn University, Auburn, AL, USA}\\
{\tt yzt0060@auburn.edu}\\ 
\vskip 10pt
{\bf Alexandra Weiner}\\
{\smallit Department of Mathematics and Statistics, Auburn University, Auburn, AL, USA}\\
{\tt gaw49@pitt.edu}\\ 
\end{center}
\vskip 20pt

\centerline{\bf Abstract}
\noindent
Consider the following Maker-Breaker game. Fix a finite subset $S\subset\mathbb{N}$ of the naturals. The players Maker and Breaker take turns choosing previously unclaimed natural numbers. Maker wins by eventually building a homothetic copy $aS+b$ of $S$, where $a\in\mathbb{N}\setminus\{0\}$ and $b\in\mathbb{Z}$. This is a generalization of the van der Waerden game analyzed by Beck. By the Hales-Jewett theorem, there exists a constant $c$ depending only on $|S|$ such that Maker can win in $c$ or less moves. We show that Maker can win in $|S|$ moves if $|S|\leq 3$. When $|S|=4$, we show that Maker can always win in $5$ or less moves and describe all $S$ such that Maker can win in $4$ moves. If $|S|\geq 5$, Maker has no winning strategy in $|S|$ moves.

\pagestyle{myheadings}
\markright{Fast winning strategies in a generalized van der Waerden game\hfill}
\thispagestyle{empty}
\baselineskip=12.875pt
\vskip 30pt

\section{Introduction}

Let $S\subset \mathbb{N}:=\{0,1,2,\dots\}$ be a finite set and consider the following game between players Maker and Breaker. The players alternately pick previously unselected natural numbers, with Maker making the first selection. Maker wins when for some $a\in\mathbb{N}\setminus\{0\}$ and $b\in\mathbb{Z}$, the set of Maker's choices contains $aS+b$.

This is a generalization of the \textit{van der Waerden game} analyzed by Beck in \cite{Beck80}. In the van der Waerden game, players pick elements of the finite set $[m]:=\{1,2,\dots,m\}$ and Maker wins by selecting all elements of an $n$-term arithmetic progression. Beck's interest is in the minimum $m$ such that Maker has a winning strategy.
The van der Waerden game is related to the following theorem.

\begin{theorem}[Van der Waerden's theorem, \cite{Waerden27}]\label{vdwthm}
    For all $n,r$ there exists $m$ such that every $r$-coloring $\Delta:[m]\to [r]$ of $[m]$ contains a monochromatic arithmetic progression $S$ of size $n$. That is, $S=\{s,s+a,s+2a,\dots,s+(n-1)a\}\subset [m]$ such that $|\Delta(S)|=1$.
\end{theorem}

A longstanding open problem is to determine, for any $n,r$, the minimum $m$ (called a van der Waerden number) that satisfies the statement of Theorem \ref{vdwthm} \cite{GRS90}. 
Brown et al. \cite{BLM97} consider a more general problem in the case that $n=3$ and $r=2$: for every $S\subset \mathbb{N}$ of size 3, they ask for the minimum $m$ such that any 2-coloring of $[m]$ contains $aS+b$ for some $a\in\mathbb{N}\setminus\{0\}$ and $b\in\mathbb{Z}$. This was completely solved in subsequent work by Kim and Rho \cite{KR12}. The game introduced in this paper generalizes the van der Waerden game in a similar manner.

By Theorem \ref{vdwthm}, Maker can always win our generalized van der Waerden game. Indeed, for any $S\subset\mathbb{N}$ we can pick $n'\in\mathbb{N}$ such that $S\subset [n']$. Theorem \ref{vdwthm} implies that there exists $m$ such that any 2-coloring of $[m]$ contains a monochromatic $n'$-term arithmetic progression $S'$. Furthermore, this arithmetic progression contains $aS+b\subset S'$ for some $a\in\mathbb{N}\setminus\{0\}$ and $b\in\mathbb{Z}$. Since Maker has the advantage of making the first move, Maker can claim the monochromatic $aS+b$ and win \cite{Beck19}. In fact, this argument shows that Maker can always win by selecting naturals from $[m]$. Thus, we could play our game on the finite board $[m]$ instead of $\mathbb{N}$. In this case, an interesting and natural problem to consider is finding the minimum $m$ such that Maker has a winning strategy. However, our focus is on minimizing the number of moves needed for Maker to win. So we choose the infinite board $\mathbb{N}$ for convenience.

\section{Literature Review}

More generally, any hypergraph $(V,\mathcal{F})$ consisting of a set $V$ and a family $\mathcal{F}\subset\mathcal{P}(V)$ of subsets of $V$ corresponds to a Maker-Breaker game. The set $V$ is called the \textit{board} and the elements of $\mathcal{F}$ are called \textit{winning sets}. Two players, Maker and Breaker, alternately pick previously unselected elements of $V$, with Maker making the first selection. Maker wins by claiming all elements of a winning set, while Breaker wins by preventing Maker from winning.

In many families of Maker-Breaker games, Maker has a winning strategy if the board is large enough (such as in the van der Waerden game). This leads to the concept of ``biased'' games, in which Maker selects one element on each turn and Breaker selects $q$ elements on each turn. Define the \textit{threshold bias} to be the integer $q'$ such that the game is Maker's win if and only if $q<q'$. Over the years, much effort has been put into understanding the threshold bias for various Maker-Breaker games (for example, see \cite{Bednarska00,Chvatal78,Glazik22,Hancock19,Krivelevich10}). Another approach, which we take in this paper, is to find fast winning strategies for Maker. In particular, there exists a minimal $r$ such that Maker can always win in $r$ or less moves; there has also been considerable work on understanding the value of $r$ for various games (for example, see \cite{CFKL12, Gledel19, HKSS07, Mikalacki18}).

For a reference on Maker-Breaker games and, more generally, positional games, we recommend the book \cite{Hefetz14}.

\section{Overview of the Paper}
Hereafter, the only game we we will consider is the one described at the beginning of this paper. In Section~\ref{sec:upper}, we provide an upper bound on the number of moves needed for Maker to win the game, depending only on $|S|$. In Section \ref{sec:winning}, we explain a method for finding winning strategies by constructing trees with certain properties, concluding with a construction that yields a win for Maker in $|S|$ moves if $|S|\leq 3$. Then in Section~\ref{sec:weak}, we introduce the notion of ``symmetric'' sets, which we use to show that Maker has no winning strategy in $|S|$ moves if $|S|\geq 5$. Section~\ref{sec:four} solves the case where $|S|=4$; we show that Maker can win in 5 moves or less, and characterize $S$ such that Maker can win in 4 moves (these turn out to be the symmetric sets first defined in Section~\ref{sec:weak}).

\section{Upper Bound for Sets of Fixed Size}\label{sec:upper}
As mentioned in the introduction, Maker wins the game for any $S\subset \mathbb{N}$, by Theorem~\ref{vdwthm}.  However, a more nuanced argument shows that there exists an upper bound on the number of moves that Maker needs to win, depending only on $|S|=n$. We thank Dömötör Pálvölgyi for bringing to our attention the proof of this fact, which is an application of the Hales-Jewett theorem. While the argument can be found in \cite{GRS90}, we provide the details here for ease to the reader.

\begin{definition}
    A \textit{line} in
    $
    S^N=\{s_1,\dots,s_n\}^N
    $
    is a collection of $n$ distinct points $\textbf{x}^1,\dots,\textbf{x}^n\in S^N$ (indexed appropriately) such that, for each $1\leq j\leq N$, either
    \begin{enumerate}
        \item $\textbf{x}^1_j=\textbf{x}^2_j=\dots=\textbf{x}^n_j$, or
        \item $\textbf{x}^k_j=s_k$ for all $1\leq k\leq n$.
    \end{enumerate}
\end{definition}

\begin{theorem}[The Hales-Jewett Theorem, \cite{GRS90}]
    For all $r,n$ there exists $N=HJ(r,n)$ such that every $r$-coloring of $S^N$ contains a monochromatic line.
\end{theorem}

\begin{theorem}\label{HJ}
    For any $S$ of size $n$, Maker has a strategy that wins in at most $\lceil \frac{1}{2}n^{HJ(2,n)}\rceil$ moves.
\end{theorem}
\begin{proof}
    Choose $m$ such that $S\subset \{0,1,\dots,m-1\}$ and set $N=HJ(2,n)$. Then the map
    \[
    \Psi:S^N\to\mathbb{N},\quad (x_0,\dots,x_{N-1})\mapsto \sum_{i=0}^{N-1} x_im^i
    \]
    is injective. Maker can adopt the strategy of selecting numbers from the range of $\Psi$. By injectivity, this produces a 2-coloring of $S^N$. By the Hales-Jewett theorem, there is a monochromatic line in $S^N$, which corresponds to a monochromatic copy of $S$. The monochromatic copy can be claimed by the player with the first move: Maker \cite{Beck19}.
\end{proof}

Let $C_n$ be the maximum number of moves that Maker needs to win for any $S$ of size $n$. By Theorem~\ref{HJ}, we have $C_n\leq\lceil\frac{1}{2}n^{HJ(2,n)}\rceil$. However, the function $HJ(r,n)$ belongs to the class $\mathcal{E}^5$ of the Grzegorczyk hierarchy \cite{Shelah88} and thus grows extremely fast. We leave as an open question the value of $C_n$ for arbitrary $n$; it might be more approachable to only improve the upper bound. By the results of this paper (Corollary \ref{cor:leq3} and Theorem \ref{5}), we have $C_n=n$ when $n\leq 3$ and $C_4=5$.

\section{Winning Strategies from Trees}\label{sec:winning}
We say that Maker has an \textit{$p$-move strategy} if there exists a strategy such that Maker always wins with $p$ selections or less. Although we initially assumed that $S\subset \mathbb{N}$, it will be convenient to talk about subsets $S\subset\mathbb{Q}$. Given $S,S'\subset\mathbb{Q}$, we say that $S'$ is a \textit{copy} of $S$ if there exists $a,b\in\mathbb{Q}$ with $a>0$ such that $S'=aS+b$. In other words, there exists an increasing affine function $f:\mathbb{Q}\to\mathbb{Q}$ such that $S'=f(R)$.
\begin{proposition}\label{prop:tree}
    Suppose that there exists a directed tree $T=(V,E)$ of height $p-1$ with the following properties:
    \begin{enumerate}
        \item The vertices are distinct rational numbers.
        \item The set of rationals along any maximal directed path contains a copy of $S$.
        \item Every vertex which is neither a leaf nor the root has outdegree at least 2.
    \end{enumerate}
    Then Maker has an $p$-move strategy.
\end{proposition}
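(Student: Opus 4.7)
The plan is to use $T$ as an explicit blueprint for Maker's strategy. On turn $1$ Maker claims the root $r$, and on each subsequent turn she picks an unclaimed child of her previous selection, walking one step deeper into $T$. Since each leaf is at distance at most $N-1$ from $r$, after at most $N$ of her own moves Maker has claimed a complete root-to-leaf branch of $T$, which by property (2) contains a copy of $S$. Hence Maker wins.

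The substantive content of the proof is showing that this descent never stalls, i.e., that Maker always has an unclaimed child of her current vertex available. I would maintain the invariant that after Maker's $k$-th move the subtree $T_{v_k}$ rooted at her current vertex $v_k$ contains no Breaker-claimed vertex. This holds vacuously when $k=1$, since Breaker has not yet moved. For the induction step, Breaker then plays one move, which can contaminate at most one of the child subtrees $T_c$ of $v_k$. Property (3) guarantees that any non-root internal vertex has out-degree at least $2$, so Maker can descend from $v_k$ to a sibling $c' \ne c$ whose subtree $T_{c'}$ is still untouched by Breaker, which both supplies her next pick and re-establishes the invariant at $v_{k+1} = c'$.

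The main obstacle I expect is the transition at turn $2$, because property (3) does not constrain the out-degree of the root. If $r$ has at least two children, the invariant-maintenance argument applies immediately from turn $1$; otherwise, a separate short check is needed for the case that Breaker's opening move targets the root's only child, showing that Maker still has enough flexibility on the infinite board to complete a copy of $S$ within her remaining moves. Everything else is carried cleanly by the invariant, and property (3) is tailored precisely so that a single Breaker move cannot block all children of Maker's current vertex at once.
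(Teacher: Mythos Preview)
Your invariant argument (``the subtree rooted at Maker's current vertex is Breaker-free'') is exactly the engine the paper uses, and your inductive step is correct once every non-leaf vertex has out-degree at least $2$. Two points need attention.

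First, the root case is not a ``short check'' you can wave through. If the root has a unique child $c$ and Breaker claims $c$, your descent strategy is dead and you must now produce a genuinely different plan that still finishes in $N$ moves; you have not supplied one. The paper does not argue this case directly either. Instead it preprocesses the tree: normalize so the root is $0$, pick a large rational $k$ so that $k(V\setminus\{0\})$ is disjoint from $V$, and attach $k(V\setminus\{0\})$ as a second copy of $T$ hanging off the root. Each branch of the scaled copy still contains a copy of $S$ (affine maps send copies to copies), and the root now has out-degree at least $2$, so the invariant argument runs uniformly from move $1$. This reduction is the actual content behind your ``flexibility on the infinite board'' remark, and it should be made explicit.

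Second, your proposal has Maker literally claiming vertices of $T$, but those are rational while the game is played on $\N$. The paper inserts one more preprocessing step: apply an increasing affine map sending all vertices (of the doubled tree) into $\N$. This is routine but needs to be said, since otherwise ``claim the root'' is not a legal move.
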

\begin{proof}
Suppose such a tree $T$ exists. We may assume that the root of $T$ is $0$ (if not, translate by the appropriate constant). Choose $k\in\mathbb{Q}$ large enough so that 
$
    kV\cap V=\{0\}.
$
Let $T'$ be the isomorphic tree obtained by replacing the vertex set of $T$ with $kV$. We then construct a larger tree $T''$ by taking the union of $T$ and $T'$ then identifying their roots. The tree $T''$ still satisfies the three properties listed above, but we also have that the root of $T''$ has outdegree 2. The first property is satisfied by choice of $k$. The second and third properties are satisfied because they hold for $T$ and $T'$. We can further assume that the vertices of $T''$ are distinct naturals, since there exists some increasing affine function $f:\mathbb{Q}\to\mathbb{Q}$ such that $f(V\cup kV)\subset\mathbb{N}$. The function $f$ is the composition of two operations: given any finite set of rationals we can multiply by the least common multiple of the denominators to obtain a set of integers, then add a suitable integer to obtain a set of naturals.

Now Maker can adopt the following strategy. Maker's first selection is the root of $T''$, and each subsequent selection is an outneighbor of the previous one such that Breaker has not selected any vertices on the subtree starting from Maker's selection. Since all the vertices are distinct and all non-leaf vertices have outdegree at least 2, on each move Breaker can pick an element from at most one subtree starting from an outneighbor of Maker's previous move. By following this strategy, Maker selects integers from a directed path. Maker wins in at most $p$ moves since every maximal directed path contains a copy of $S$ and the height of the tree is $p-1$.
\end{proof}

From now on, we will look for trees as described in the proposition, as they correspond to winning strategies for Maker.
\begin{example}\label{1234}\normalfont
    Figure~\ref{fig:1234} displays a tree giving a $4$-move strategy for Maker when $S=\{1,2,3,4\}$. Intuitively, Proposition~\ref{prop:tree} states that the game's board may as well be $\mathbb{Q}$ and that Maker may as well select two elements on the first move. From this perspective, Maker wins by first selecting $0$ and $1$, then making successive selections by following a directed path.
\end{example}
\begin{example}\label{0236}
    Figure~\ref{fig:1347} displays a tree giving a 5-move strategy when $S=\{0,2,3,6\}$.
\end{example}

\begin{figure}
    \centering
     \begin{tikzpicture}
  \node(0) at (0,1) {$0$};
  \node (1) at (0,0) {$1$};
  \node (c) at (3,-1.5) {$2$};
  \node (d) at (-3,-1.5) {$\frac{1}{2}$};
  \node (a) at (-4.5,-3) {$-\frac{1}{2}$};
  \node (b) at (-1.5,-3) {$\frac{3}{2}$};
  \node (f) at (1.5,-3) {$-1$};
  \node (g) at (4.5,-3) {$3$};
  \draw[->] (0) edge (1) (1) edge (c) (1) edge (d) (c) edge (f) (c) edge (g) (d) edge (a) (d) edge (b);
\end{tikzpicture}
    \caption{A tree giving a winning strategy for $\{1,2,3,4\}$.}
    \label{fig:1234}
\end{figure}

\begin{figure}
    \centering
     \begin{tikzpicture}
  \node (1) at (0,1) {$0$};
  \node (2) at (0,0) {$48$};
  \node (h) at (-3,-1.5) {$16$};
  \node (j) at (3,-1.5) {$12$};
  \node (k) at (-5,-1.5) {$24$};
  \node (l) at (5,-1.5) {$-24$};
  \node (c) at (3,-3) {$-6$};
  \node (d) at (-3,-3) {$64$};
  \node (a) at (-4.5,-4.5) {$-32$};
  \node (b) at (-1.5,-4.5) {$112$};
  \node (f) at (1.5,-4.5) {$21$};
  \node (g) at (4.5,-4.5) {$3$};
  \draw[->] (1) edge (2);
  \draw[->] (2) edge (h) (h) edge (d)  (2) edge (j) (j) edge (c) (j) edge (l) (h) edge (k) (c) edge (f) (c) edge (g) (d) edge (a) (d) edge (b); 
\end{tikzpicture}
    \caption{A tree giving a winning strategy for $\{0,2,3,6\}$.}
    \label{fig:1347}
\end{figure}

\begin{figure}
    \centering
    \begin{tikzpicture}
        \node (0) at (0,1) {$0$};
        \node (1) at (0,0) {$1$};
        \node (2) at (2,-1) {$\frac{b+c}{b}$};
        \node (3) at (-2,-1) {$\frac{b}{b+c}$};        
        \draw[->] (0) edge (1) (1) edge (2) (1) edge (3);
    \end{tikzpicture}
    \caption{A tree giving a winning strategy for sets of size 3.}
    \label{fig:win3}
\end{figure}

\begin{corollary}\label{cor:leq3}
    When $|S|\leq 3$, Maker has a $|S|$-move strategy.
\end{corollary}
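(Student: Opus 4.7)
The plan is to invoke Proposition~1 by exhibiting, for each $|S|\in\{1,2,3\}$, a tree of depth at most $|S|-1$ whose vertices are distinct rationals, every root-to-leaf branch contains a copy of $S$, and every non-root, non-leaf vertex has out-degree at least $2$. The cases $|S|=1$ and $|S|=2$ are essentially trivial: for $|S|=1$ the tree is a single vertex (which is itself a copy of $S$), and for $|S|=2$ the tree is a root with a single leaf child, since any two distinct rationals form a copy of a $2$-element set.

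The only case requiring work is $|S|=3$. Write $S=\{s_1,s_2,s_3\}$ with $s_1<s_2<s_3$. I would build a depth-$2$ tree consisting of a root $r$, an intermediate vertex $v$, and two leaves $c_1,c_2$ hanging from $v$. The key observation is that every ordered pair of distinct rationals $r<v$ lies in exactly three copies of $S$, indexed by the pairs $(i,j)$ with $1\le i<j\le 3$: the unique copy in which $(r,v)=(as_i+b,\,as_j+b)$ determines $a>0$ and $b$, hence determines a third point in $\mathbb{Q}$. Normalizing $r=0$ and $v=1$, the three third points are
$$
x_A=\frac{s_3-s_1}{s_2-s_1},\qquad x_B=\frac{s_2-s_1}{s_3-s_1},\qquad x_C=\frac{s_1-s_2}{s_3-s_2}.
$$
A direct sign check gives $x_A>1$, $x_B\in(0,1)$, and $x_C<0$, so these three candidates lie in pairwise disjoint intervals of $\mathbb{Q}$ that also avoid $\{0,1\}$. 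Taking $c_1,c_2$ to be any two of $x_A,x_B,x_C$ produces a tree with four distinct rational vertices in which every branch contains a copy of $S$, the unique non-root, non-leaf vertex $v$ has out-degree $2$, and every leaf is at distance $2=|S|-1$ from the root.

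The main (indeed only) obstacle is guaranteeing condition (1) of the proposition, namely that the four vertices are pairwise distinct. The interval analysis above handles this cleanly and uniformly in $s_1,s_2,s_3$, with no case distinctions needed. Applying Proposition~1 with $N=|S|$ then yields the claimed $|S|$-move strategy in each case.
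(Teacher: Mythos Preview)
Your proof is correct and follows essentially the same approach as the paper: both construct the depth-$2$ tree with root $0$, intermediate vertex $1$, and two leaf completions, then apply Proposition~1. The paper simply writes down the two leaves $\frac{b+c}{b}$ and $\frac{b}{b+c}$ (your $x_A$ and $x_B$, with $b=s_2-s_1$ and $c=s_3-s_2$) without explicitly checking distinctness, whereas your interval argument makes this verification explicit and also identifies the third completion $x_C$.
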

\begin{proof}
    If $|S|$ is $1$ or $2$ the statement is trivial. Suppose $|S|=3$ and write $S=\{a,a+b,a+b+c\}$. Consider the tree in Figure \ref{fig:win3}. Proposition~\ref{prop:tree} completes the proof.
\end{proof}

\section{A Weak Lower Bound}\label{sec:weak}
From now on, we use the convention that $S=\{s_1,\dots,s_n\}$ with its elements indexed in increasing order (in particular, $|S|=n$).
The best hope for Maker is to win in $n$ moves. A necessary condition on $S$ for this to be guaranteed is that after $n-1$ moves, there are at least two different ways to complete a copy of $S$. Otherwise, Breaker would take the only move. This motivates the following definition.
\begin{definition}
\normalfont
    The set $S$ is called \textit{symmetric} if there exists $1\leq i<j\leq n$ such that $S\setminus\{s_i\}$ is a copy of $S\setminus\{s_j\}$. In this case we say that $S$ has the \textit{type} $(i,j)$.
\end{definition}
\begin{example}\normalfont
    The arithmetic sequence $\{1,2,\dots,n\}$ is symmetric of type $(1,n)$. The set $\{1,2,3,5\}$ is symmetric of type $(2,4)$ since $\{1,3,5\}=2\{1,2,3\}-1$.
\end{example}
\begin{lemma}\label{nec}
    Maker can only have an $n$-move strategy if $S$ is symmetric. 
\end{lemma}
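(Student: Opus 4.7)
The plan is as follows. Since any copy of $S$ has $n$ elements, Maker needs at least $n$ moves to win, so any $n$-move winning strategy $\sigma$ must in fact win in exactly $n$ moves. I will focus on $n \geq 3$; the case $n \leq 2$ is immediate, since every two-element set is trivially symmetric. The key observation I plan to exploit is that, for Maker to have a winning $n$-th move regardless of Breaker's $(n-1)$-th move, the set $M$ of Maker's first $n-1$ selections must admit at least two distinct elements $x \neq y$ with $M \cup \{x\}$ and $M \cup \{y\}$ both copies of $S$. From such a pair, the symmetry of $S$ will follow by short algebra.

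To realize this, I would fix any Breaker strategy $\tau$ and play $(\sigma,\tau)$, letting $M = \{m_1, \ldots, m_{n-1}\}$ be Maker's first $n-1$ moves and $m_n$ Maker's $n$-th move dictated by $\sigma$, so $M \cup \{m_n\} = aS + b$ for some $a \in \N \setminus \{0\}$, $b \in \Z$. Then I would define an alternate Breaker strategy $\tau'$ that agrees with $\tau$ through Breaker's first $n-2$ moves and plays $m_n$ as Breaker's $(n-1)$-th move (legal since $m_n$ was still unclaimed at that moment). Since $\sigma$ responds only to Breaker's past moves, Maker's first $n-1$ moves against $\tau'$ are identical to those against $\tau$, so Maker still has $M$ in hand. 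Playing $\sigma$ against $\tau'$ yields a winning $n$-th move $m_n' \neq m_n$ (since $m_n$ is now claimed by Breaker), with $M \cup \{m_n'\} = a'S + b'$.

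Now I would extract the symmetry. Writing $m_n = a s_i + b$ and $m_n' = a' s_j + b'$, the two completions give
\[
M \;=\; a(S \setminus \{s_i\}) + b \;=\; a'(S \setminus \{s_j\}) + b'.
\]
The crux is to rule out $i = j$: if $i = j$, composing the two affine identifications of $M$ with $S \setminus \{s_i\}$ produces an affine self-map of $M$ with positive slope $a'/a$, which must then be order-preserving and hence the identity on $M$ (using $|M| = n - 1 \geq 2$, so two fixed points pin down the affine map); but this forces $(a,b) = (a',b')$ and $m_n = m_n'$, a contradiction. Therefore $i \neq j$, and rearranging yields $S \setminus \{s_j\} = (a/a')(S \setminus \{s_i\}) + (b - b')/a'$ with positive scaling factor $a/a'$, exhibiting $S$ as symmetric of type $(\min(i,j),\max(i,j))$.

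The main delicacy will be the bookkeeping around the two games—specifically, confirming that Maker's first $n-1$ selections truly match under $\tau$ and $\tau'$, and that substituting $m_n$ in for Breaker's $(n-1)$-th move is legal. Once that is in place, the passage from the two completion equations to symmetry is routine.
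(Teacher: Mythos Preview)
Your proof is correct and rests on the same idea as the paper's: Maker's first $n-1$ selections must admit at least two distinct one-point completions to a copy of $S$, and this forces symmetry. The paper phrases this contrapositively (if $S$ is not symmetric, Breaker simply blocks the unique completion $f(s_i)$), whereas you argue directly by running the winning strategy against two Breaker plays $\tau$ and $\tau'$; your explicit treatment of the $i=j$ case via the order-preserving affine self-map of $M$ is a detail the paper leaves implicit.
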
 
\begin{proof}
    By Corollary \ref{cor:leq3}, if $n\leq 3$ then Maker has an $n$-move strategy; but in this case $S$ is symmetric of every possible type. Now, assume that $n\geq 4$. Suppose Maker has an $n$-move strategy. Using this strategy, Maker must construct $f(S\setminus\{s_i\})$ after $n-1$ moves for some increasing affine $f:\mathbb{Q}\to\mathbb{Q}$. Breaker can then select $f(s_i)$ in an attempt to prevent Maker's win. The fact that Maker can still win means that there exists $s_j\in S$ and an increasing affine $g:\mathbb{Q}\to\mathbb{Q}$ such that $f(S\setminus \{s_i\})=g(S\setminus\{s_j\})$. We have $i\neq j$, for otherwise the increasing affine function $g^{-1}\circ f$ fixes $S\setminus\{s_i\}$ pointwise, which is impossible since $f\neq g$.
\end{proof}
\begin{lemma}\label{types}
        A symmetric set of size $n\geq 4$ can only be of type $(1,n)$, $(2,n)$ or $(1,n-1)$.
\end{lemma}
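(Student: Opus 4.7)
The plan is to use the fact that any increasing affine map $f: \Q \to \Q$ preserves order on finite subsets, so it matches the $k$-th smallest elements of its domain and image. Suppose $S$ is symmetric of type $(i,j)$, and pick an increasing affine $f$ with $f(S\setminus\{s_j\}) = S\setminus\{s_i\}$. Listing both sides in increasing order, $f$ must send the $k$-th smallest element of $S\setminus\{s_j\}$ to the $k$-th smallest element of $S\setminus\{s_i\}$. Since an affine map is determined by its values at two distinct points, constraints at the extreme positions will pin $f$ down completely.

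First I would work out the forced behavior of $f$ at the extremes. When $i \geq 2$, both $S\setminus\{s_i\}$ and $S\setminus\{s_j\}$ still contain $s_1$ as their smallest element (using $j > i \geq 2$), forcing $f(s_1) = s_1$. Symmetrically, when $j \leq n-1$, both sets contain $s_n$ as their largest element, forcing $f(s_n) = s_n$.

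The main step then rules out the case $i \geq 2$ and $j \leq n-1$ simultaneously: here $f$ fixes both $s_1$ and $s_n$, so $f$ is the identity, giving $S\setminus\{s_i\} = S\setminus\{s_j\}$ and contradicting $i \neq j$. This restricts the possible types to those with $i = 1$ or $j = n$.

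Finally I would eliminate the remaining sub-cases. Suppose $i = 1$ and $2 \leq j \leq n-2$. Then $f(s_n) = s_n$ (largest matches largest, since $j < n$), and the second-largest of each side is $s_{n-1}$ (since $j \neq n-1$), giving $f(s_{n-1}) = s_{n-1}$; again $f$ is the identity, a contradiction. The mirror case $3 \leq i \leq n-1$ and $j = n$ uses the two smallest elements identically. This leaves exactly the three types $(1,n)$, $(1,n-1)$, and $(2,n)$. The \emph{main obstacle} is nothing conceptual but rather careful bookkeeping: correctly tracking how the extremal positions of $S\setminus\{s_i\}$ and $S\setminus\{s_j\}$ shift precisely when the excluded index $i$ or $j$ is itself extremal.
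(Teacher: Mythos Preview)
Your argument is correct and follows essentially the same approach as the paper: both proofs exploit that an increasing affine map preserves order, so it must fix certain extremal elements of $S$ shared by $S\setminus\{s_i\}$ and $S\setminus\{s_j\}$, and two fixed points force $f$ to be the identity. The only difference is in bookkeeping: the paper directly argues $i\le 2$ (else $s_1,s_2$ are both fixed) and $j\ge n-1$ (else $s_{n-1},s_n$ are both fixed), then rules out $(2,n-1)$ using $s_1$ and $s_n$, whereas you first reduce to $i=1$ or $j=n$ and then handle two residual sub-cases --- a slightly longer but equivalent decomposition.
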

\begin{proof}
Suppose that $S$ is symmetric of type $(i,j)$. If $i>2$ (resp. $j<n-1$), then the first (resp. last) two elements of $S$ are also the first (resp. last) two elements of $S\setminus\{s_i\}$ and $S\setminus\{s_j\}$. This is a contradiction since an affine function $f:\mathbb{Q}\to\mathbb{Q}$ does not fix two distinct points unless $f$ is the identity function. The only remaining case to rule out is $(i,j)=(2,n-1)$. However, in this case the first and last elements are the same in $S\setminus\{s_i\}$ and $S\setminus\{s_j\}$.
\end{proof}

The group of increasing affine functions $\mathbb{Q}\to\mathbb{Q}$ acts on the set of symmetric subsets $S\subset\mathbb{Q}$. The type of $S$ is an invariant of this group action. Our next goal is to show that symmetric sets that are not arithmetic sequences can be characterized by their size, type, and a rational $k>1$. We do this by giving explicit representatives in each orbit of the group action.

\begin{definition}\normalfont
    For $n$ and $k\in \mathbb{Q}$ such that $k>1$, define
    \[
        S_1(n,k)=\{1,k,k^2,\dots,k^{n-1}\}
    \]\
    and
\begin{equation*}
    S_2(n,k)=
    \begin{cases}
        \{0\} & \text{if $n=1$,} \\
        \{0,1,k,k^2,\dots,k^{n-2}\} & \text{if $n\geq 2$.}
    \end{cases}
\end{equation*}
\end{definition}

\begin{lemma}\label{canon}
    Suppose $S$ is symmetric of type $(1,n)$ and is not an arithmetic sequence. Then there exists $k>1$ such that $S_1(n,k)$ is a copy of $S$.
\end{lemma}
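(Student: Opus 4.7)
The plan is to extract a closed-form geometric-progression structure from the $(1,n)$-symmetry and then identify $S$ as an affine copy of $C_1(n,k)$.

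By the $(1,n)$-symmetry, there exist $a\in\mathbb{Q}^+$ and $b\in\mathbb{Q}$ such that $f(x)=ax+b$ is an increasing affine map sending $S\setminus\{s_n\}$ bijectively onto $S\setminus\{s_1\}$. Since $f$ preserves the increasing order on these $(n-1)$-element sets, the matching must be term-by-term: $f(s_i)=s_{i+1}$ for $i=1,\ldots,n-1$. Because $S$ is not arithmetic, the common ratio $a=(s_{i+2}-s_{i+1})/(s_{i+1}-s_i)$ is not $1$, and so $f$ has a unique rational fixed point $\alpha:=b/(1-a)$. Subtracting $\alpha$ from the recurrence and iterating $s_{i+1}-\alpha=a(s_i-\alpha)$ gives the closed form
\[
s_i = \alpha + a^{i-1}(s_1-\alpha),\qquad i=1,\ldots,n,
\]
so $S-\alpha = (s_1-\alpha)\cdot\{1,a,a^2,\ldots,a^{n-1}\}$ is an affine image of a geometric progression.

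To match this to $C_1(n,k)$, I take $k=a$ when $a>1$ and $k=1/a$ when $0<a<1$; either way $k>1$. In the case $a>1$, the monotonicity of $s_i$ combined with $a^{i-1}$ being increasing forces $s_1-\alpha>0$, so $x\mapsto(s_1-\alpha)x+\alpha$ is an increasing affine bijection from $C_1(n,k)=\{1,k,k^2,\ldots,k^{n-1}\}$ onto $S$, exhibiting $C_1(n,k)$ as a copy of $S$. In the case $0<a<1$, I reindex by $j=n+1-i$ and use $a^{i-1}=k^{1-i}$ to rewrite the same set $S$ as $\alpha+c'\cdot C_1(n,k)$ for an appropriate scalar $c'$ (coming from $(s_1-\alpha)k^{1-n}$ or an equivalent reorganization); a sign check on $c'$ then completes this case.

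The crux is the fixed-point computation yielding the formula $s_i=\alpha+a^{i-1}(s_1-\alpha)$; the rest is case analysis and bookkeeping to choose $k>1$ and verify that the resulting scaling factor is positive so one truly has a copy in the sense of the paper's definition. I expect the $a<1$ case to be the main obstacle, as the sign-tracking after reindexing is the most subtle part of the argument.
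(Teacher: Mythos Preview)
Your route via the fixed point $\alpha$ is essentially the paper's argument in different clothing: the paper works with the consecutive differences $d_i=s_{i+1}-s_i$, shows $d_{i+1}=kd_i$ and then uses the geometric-series identity to pass from $\{0,1,1+k,\ldots\}$ to $\{1,k,\ldots,k^{n-1}\}$, which is exactly your closed form $s_i=\alpha+a^{i-1}(s_1-\alpha)$ after the affine change $x\mapsto (x-\alpha)/(s_1-\alpha)$. The $a>1$ case is handled correctly in both.

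The gap is in your $0<a<1$ branch: the deferred ``sign check on $c'$'' actually fails. From $s_2>s_1$ you get $(a-1)(s_1-\alpha)>0$, and since $a<1$ this forces $s_1-\alpha<0$; hence $c'=(s_1-\alpha)k^{1-n}<0$, so $C_1(n,k)$ is only a \emph{reflection} of $S$, not a copy in the paper's sense. A concrete witness is $S=\{0,4,6,7\}$, which has symmetry type $(1,4)$ with $a=\tfrac12$; here $C_1(4,2)=\{1,2,4,8\}=8-S$ is a reflection, and no $k>1$ makes $C_1(4,k)$ a copy of $S$. The paper's own $k<1$ step has the same slip: multiplying $(k-1)S'+1$ through by $k^{-(n-1)}$ leaves the net scaling $k^{-(n-1)}(k-1)<0$. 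The clean repair, for either argument, is to observe that passing from $S$ to its reflection swaps the roles of $s_1$ and $s_n$ and replaces $a$ by $1/a$, so one may assume $a>1$ from the start; equivalently, the conclusion should read that $C_1(n,k)$ is a copy of $S$ or of its reflection, which is all that the downstream applications require.
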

\begin{proof}
By assumption, we have that $S\setminus\{s_1\}=f(S\setminus\{s_n\})$ for some increasing affine function $f(x)=kx+b$, with $k,b\in\mathbb{Q}$ and $k>0$. Expanding out this equation,
\[
\{s_2,s_3,\dots,s_n\}=\{f(s_1),f(s_2),\dots,f(s_{n-1})\}.
\]
Since increasing functions preserve order, this means that
\begin{equation}\label{eq:iterate}
s_{i+1}=f(s_i),\quad \text{for all $i\in\{1,\dots,n-1\}$.}
\end{equation}
We may assume that $s_1=0$ and $s_2=1$ (if not, apply an appropriate increasing affine function). Then $f(0)=1$, which implies that $b=1$. Furthermore, by Equation \eqref{eq:iterate}, we have
\[
S=\{0,1,1+k,1+k+k^2,\dots\} \quad\text{and}\quad
f(S)=\{1,1+k,1+k+k^2,\dots\}.
\]
Since $S$ is not an arithmetic sequence, $k\neq 1$. Using the geometric series formula $\sum_{i=0}^{n-1}k^i=\frac{k^n-1}{k-1}$ we have
$$(k-1)\cdot f(S)+1=\{k,k^2,\dots,k^{n}\}.$$
If $k>1$, note that $S_1(n,k)$ is a copy of the above. If $k<1$, note that $S_1(n,\frac{1}{k})$ is a copy of the above.
\end{proof}

Given $S,S'\subset\mathbb{Q}$, we say that $S'$ is a \textit{reflection} of $S$ if there exists $a,b\in\mathbb{Q}$ with $a<0$ such that $S'=aS+b$. This definition is similar to that of ``copy,'' but with $a$ being negative instead of positive. 
of $S$. If $S$ has symmetry type $(2,n)$, then a reflection of $S$ has symmetry type $(1,n-1)$.

\begin{lemma}\label{canon2}
    Suppose $S$ is symmetric of type $(2,n)$ (resp. $(1,n-1)$). Then there exists some $k>1$ such that $S_2(n,k)$ is a copy (resp. reflection) of $S$.
\end{lemma}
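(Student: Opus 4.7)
The plan is to handle the $(2,n)$ case directly by extracting the geometric progression hidden in the affine self-map, then reduce the $(1,n-1)$ case to it using the reflection trick already highlighted in the paragraph preceding the definition of $C_2(n,k)$.

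For the $(2,n)$ case, by definition there exist $a>0$ and $b$ with $S\setminus\{s_2\}=a(S\setminus\{s_n\})+b$. Since $a>0$, matching the two lists in increasing order gives $s_1=as_1+b$ together with $s_{i+1}=as_i+b$ for $i=2,\dots,n-1$. The first equation shows that the affine map $f(x)=ax+b$ fixes $s_1$. Setting $t_i:=s_i-s_1$, the remaining equations read $t_{i+1}=at_i$ for $i\geq 2$, with $t_1=0$, so $t_i=a^{i-2}t_2$ for $i\geq 2$. The chain $s_1<s_2<s_3$ forces $0<t_2<at_2$, hence $a>1$. Applying the increasing affine change of variables $x\mapsto (x-s_1)/t_2$, I conclude that $S$ is a copy of $\{0,1,a,a^2,\dots,a^{n-2}\}=C_2(n,a)$; equivalently, $C_2(n,a)$ is a copy of $S$.

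For the $(1,n-1)$ case, let $S'=-S$, whose elements in increasing order are $s'_i=-s_{n+1-i}$. Negating the relation $S\setminus\{s_1\}=a(S\setminus\{s_{n-1}\})+b$ gives $S'\setminus\{s'_n\}=a(S'\setminus\{s'_2\})-b$ with $a>0$, so $S'$ has symmetry type $(2,n)$ (using that ``is a copy of'' is a symmetric relation, since the inverse of an increasing affine map is increasing affine). By the previous case, there exist $k>1$, $a'>0$, and $b'$ with $C_2(n,k)=a'S'+b'=-a'S+b'$, which exhibits $C_2(n,k)$ as a reflection of $S$ because $-a'<0$. The only real obstacle is bookkeeping: tracking the index shifts when removing $s_2$ or $s_{n-1}$, and verifying that the ordering conditions force the exponent $k$ to be genuinely greater than $1$ (directly from $s_2<s_3$ in the $(2,n)$ case, and from the reflection in the $(1,n-1)$ case).
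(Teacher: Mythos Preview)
Your proof is correct. The paper does not actually write out a proof of this lemma, merely flagging it as ``analogous'' to Lemma~\ref{canon} and pointing to the reflection remark in the preceding paragraph; your argument is precisely the natural way to cash out that analogy, with the minor variation that you translate to the fixed point $s_1$ (setting $t_i=s_i-s_1$) rather than working with consecutive differences $d_i=s_{i+1}-s_i$ as in Lemma~\ref{canon}, which is in fact slightly cleaner here since $s_1$ is fixed by the affine map.
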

\begin{proof}
    Suppose $S$ is symmetric of type $(2,n)$. Taking an approach similar to the proof of Lemma \ref{canon}, we have
    \[
    \{s_1,s_3,s_4,\dots,s_n\}=\{f(s_1),f(s_2),f(s_3),\dots,f(s_{n-1})\}
    \]
    for some increasing affine function $f(x)=kx+b$, with $k,b\in\mathbb{Q}$ and $k>0$. Then
    \begin{equation*}
    f(s_i)=
        \begin{cases}
            s_i & \text{if $i=1$,} \\
            s_{i+1} & \text{if $i>1$.}
        \end{cases}
    \end{equation*}
    We may assume that $s_1=0$ and $s_2=1$. Then $f(0)=0$, which implies that $b=0$. It follows that $S=S_2(n,k)$, where $k>1$ by the assumption that $s_1<\dots<s_n$. If $S$ is symmetric of type $(1,n-1)$, then $-S$ is symmetric of type $(2,n)$ and therefore a copy of $S_2(n,k)$. This means that $S$ is a reflection of $S_2(n,k)$.
\end{proof}
Lemmas \ref{canon} and \ref{canon2} reveal a peculiar property of symmetric sets. If $S$ is symmetric of type $(i,j)$, then $S\setminus\{s_i\}$ and $S\setminus\{s_j\}$ are also symmetric of a similar type. This can be checked directly from the definitions of $S_1(n,k)$ and $S_2(n,k)$. For example, a symmetric set $S$ with symmetry type $(1,n)$ is a copy of $S_1(n,k)=\{1,k,\dots,k^{n-1}\}$. Then $S\setminus\{s_n\}$ is a copy of $\{1,k,\dots,k^{n-1}\}\setminus\{k^{n-1}\}=S_1(n-1,k)$ and, by the definition of a symmetric set, so is $S\setminus\{s_1\}$.
\begin{lemma}\label{nos}
    If $n\geq 4$ then $S$ cannot be simultaneously symmetric of two different types.
\end{lemma}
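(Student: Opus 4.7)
The plan is to attach to every finite set $T=\{t_1<t_2<\cdots<t_n\}\subset\mathbb{Q}$ a complete invariant of its copy-equivalence class, namely the \emph{ratio sequence}
\[
\sigma(T)=(r_1,\ldots,r_{n-2}),\qquad r_i=\frac{t_{i+2}-t_{i+1}}{t_{i+1}-t_i}.
\]
Because an increasing affine map scales every consecutive gap by the same positive factor, $\sigma$ is invariant under copies, and two sets are copies of each other if and only if their ratio sequences coincide. Using Lemmas \ref{canon}, \ref{canon2} and the fact (noted in the paragraph before Definition of $C_2$) that reflecting a type $(2,n)$ set yields a type $(1,n-1)$ set, I can read $\sigma$ directly off the representative copies:
\emph{type $(1,n)$} gives $\sigma=(k,k,\ldots,k)$ for some rational $k\ge 1$ (with $k=1$ the arithmetic case);
\emph{type $(2,n)$} gives $\sigma=(k-1,k,k,\ldots,k)$ for some rational $k>1$;
and \emph{type $(1,n-1)$}, obtained by reversing the difference sequence of $C_2(n,k)$, gives $\sigma=(1/k,\ldots,1/k,1/(k-1))$ for some rational $k>1$.

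Since $n\ge 4$, $\sigma$ has length at least $2$. Types $(2,n)$ and $(1,n-1)$ each produce a non-constant $\sigma$: the first two entries of a $(2,n)$ sequence differ because $k-1\neq k$, and the last two entries of a $(1,n-1)$ sequence differ because $1/k\neq 1/(k-1)$. Type $(1,n)$, on the other hand, always produces a constant $\sigma$. This immediately rules out the combinations ``$(1,n)$ and $(2,n)$'' and ``$(1,n)$ and $(1,n-1)$''.

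The only remaining case, and the one I expect to be the main obstacle, is ruling out that $S$ is simultaneously type $(2,n)$ and type $(1,n-1)$. Equating the two forms of $\sigma$ gives
\[
(k-1,\,k,\,k,\,\ldots,\,k)=(1/k',\,1/k',\,\ldots,\,1/k',\,1/(k'-1)).
\]
For $n\ge 5$ the second entry forces $k=1/k'$ while the last forces $k=1/(k'-1)$, so $1/k'=1/(k'-1)$, contradiction. For $n=4$ the system is only two equations long: $k-1=1/k'$ and $k=1/(k'-1)$. Eliminating $k'$ via $k'=1+1/k$ and substituting yields the quadratic $k^2-k-1=0$. Since $S\subset\mathbb{N}$, the scaling in the definition of a copy forces $k\in\mathbb{Q}$, but the positive root $(1+\sqrt{5})/2$ is irrational — closing the final case. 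The only calculation one has to do by hand is this single quadratic; everything else is a direct comparison of ratio sequences.
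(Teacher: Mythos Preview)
Your proof is correct and takes a genuinely different route from the paper's. You introduce the ratio sequence $\sigma(T)$ as a complete copy-invariant and compute it directly from the canonical forms $C_1,C_2$; the paper instead argues the first two pairings by transitivity of the copy relation (if $S$ has both type $(1,n)$ and $(2,n)$ then $S\setminus\{s_1\}$ and $S\setminus\{s_2\}$ are each copies of $S\setminus\{s_n\}$, forcing a forbidden type $(1,2)$ via Lemma~\ref{types}), and handles the $(2,n)$--$(1,n-1)$ case by writing down the affine map between the two canonical forms, extracting a single polynomial identity $(l^{n-2}-l^{n-3})l^{(n-2)^2}-l^{n-2}(l^{n-2}-1)^{n-2}=0$ valid for all $n$, and applying the rational root theorem. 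Your approach is lighter and more transparent: the constant/non-constant dichotomy for $\sigma$ dispatches the first two pairings in one line, and by separating $n\ge 5$ (immediate contradiction from the second and last entries) from $n=4$ (the quadratic $k^2-k-1=0$) you pinpoint exactly the golden-ratio obstruction that the paper only mentions as a remark afterward. The paper's single-polynomial argument has the minor advantage of treating all $n\ge 4$ uniformly, but at the cost of a much larger expression.
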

\begin{proof}
    By Lemma \ref{types}, we can prove the claim by checking three cases.
    Suppose $S$ is simultaneously symmetric of types $(1,n)$ and $(2,n)$. Then $S\setminus\{s_1\}$ is a copy of $S\setminus\{s_n\}$ and $S\setminus\{s_2\}$ is a copy of $S\setminus\{s_n\}$. It follows that $S\setminus\{s_1\}$ is a copy of $S\setminus\{s_2\}$, i.e. $S$ also has symmetry type $(1,2)$. But this is impossible, by Lemma \ref{types}. Similarly, $S$ cannot be simultaneously symmetric of types $(1,n)$ and $(1,n-1)$. This leaves one remaining case. \par

    Suppose $S$ is simultaneously symmetric of types $(2,n)$ and $(1,n-1)$. By Lemma \ref{canon2}, there exist increasing affine functions $f,g:\mathbb{Q}\to\mathbb{Q}$ and constants $k,l>1$ such that
    $$f(\{0,1,k,\dots,k^{n-3},k^{n-2}\})=S=g(\{-l^{n-2},-l^{n-3},\dots,-l,-1,0\}).$$
    Let $h=g^{-1}\circ f$. Since increasing functions preserve order, $h(0)=-l^{n-2}$, $h(1)=-l^{n-3}$, $h(k^{n-3})=-1$ and $h(k^{n-2})=0$. If $h(x)=ax+b$, the first equation implies $b=-l^{n-2}$. From the other three we get the following system.
    \begin{align*}
        a-l^{n-2}&=-l^{n-3}\\
        ak^{n-3}-l^{n-2}&=-1\\
        ak^{n-2}-l^{n-2}&=0
    \end{align*}
    Solving for $a$ and $k$ in terms of $l$, we obtain $a=l^{n-2}-l^{n-3}$ and $k=\frac{l^{n-2}}{l^{n-2}-1}$. Then we plug these values into $ak^{n-2}-l^{n-2}=0$ and clear the denominator to get
    $$(l^{n-2}-l^{n-3})l^{(n-2)^2}-l^{n-2}(l^{n-2}-1)^{n-2}=0.$$
    By the rational root theorem, the only possible rational solutions are $l=\pm 1$. But this contradicts $l>1$.
\end{proof}
We find it interesting to note that if $S$ is allowed to contain any real number, $S=\{0,1,\varphi,\varphi^2\}$ is simultaneously symmetric of types $(2,4)$ and $(1,3)$, where $\varphi=\frac{1+\sqrt{5}}{2}$ is the golden ratio.

We end this section with one of the main claims of this paper (Theorem \ref{gab}). Its proof uses the following lemma. 
\begin{lemma}\label{han}
    Suppose Maker has an $n$-move strategy. In any game using this winning strategy, on the $(n-r)$th move Maker builds either
    \begin{itemize}
        \item a copy of $S_1(n-r,k)$, if $S$ has type $(1,n)$,
        \item a copy of $S_2(n-r,k)$, if $S$ has type $(2,n)$, or
        \item a reflection of $S_2(n-r,k)$, if $S$ has type $(1,n-1)$.
    \end{itemize}
    Hence, Maker has an $(n-r)$-move strategy for symmetric sets of size $n-r$ of the corresponding type.
\end{lemma}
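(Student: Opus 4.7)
The plan is to proceed by downward induction on the move number $m = n - r$, starting from $m = n$. Lemmas \ref{nec}, \ref{types}, \ref{canon}, and \ref{canon2} combine to force $S$ to be a copy of $C_1(n,k)$ (if $S$ has symmetry type $(1,n)$), a copy of $C_2(n,k)$ (type $(2,n)$), or a reflection of $C_2(n,k)$ (type $(1,n-1)$), for some $k > 1$. Since Maker plays only $n$ moves and wins by building a copy of $S$, Maker's set at move $n$ is exactly a copy of $S$, hence a copy (respectively reflection) of $C_s(n,k)$. This gives the base case.

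For the inductive step, fix $3 \leq m \leq n-1$, let $P$ be Maker's set after move $m$ in an arbitrary game played according to the winning strategy, and assume the claim at move $m+1$. Since the strategy is winning, from position $P$ Maker must have at least two distinct possible next moves $y_1 \neq y_2$; otherwise Breaker could simply preempt the unique forced response and defeat the strategy. By the inductive hypothesis, each of $P \cup \{y_1\}$ and $P \cup \{y_2\}$ is a copy (or reflection) of $C_s(m+1,k)$; writing $P \cup \{y_i\} = f_i(C_s(m+1,k))$ and $x_i = f_i^{-1}(y_i) \in C_s(m+1,k)$, we have $P = f_i(C_s(m+1,k) \setminus \{x_i\})$ for $i = 1, 2$. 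The maps $f_1$ and $f_2$ have the same sign (both increasing in cases $(1,n)$ and $(2,n)$, both decreasing in case $(1,n-1)$), so $f_1^{-1} \circ f_2$ is order-preserving; and since two affine maps that agree on $\geq 2$ points coincide, $x_1 \neq x_2$.

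Hence $C_s(m+1,k) \setminus \{x_1\}$ is an increasing copy of $C_s(m+1,k) \setminus \{x_2\}$, exhibiting a symmetry of $C_s(m+1,k)$. When $m+1 \geq 4$, Lemma \ref{nos} forces this to coincide with the built-in symmetry type of $C_s(m+1,k)$, namely $(1,m+1)$ for $s = 1$ and $(2,m+1)$ for $s = 2$. Consequently $\{x_1, x_2\}$ must be $\{1, k^m\}$ or $\{1, k^{m-1}\}$ respectively, and a direct computation (for instance $C_1(m+1,k) \setminus \{1\} = k \cdot C_1(m,k)$ and $C_1(m+1,k) \setminus \{k^m\} = C_1(m,k)$) verifies that removing either element produces a copy of $C_s(m,k)$. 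Composing with $f_i$ then shows that $P$ is a copy or reflection of $C_s(m,k)$, as required. The boundary cases $m \leq 2$ are trivial since any rational set of size at most two is a copy (or reflection) of $C_s(|P|,k)$. The \textbf{hence} clause is immediate: truncating the winning $n$-move strategy to its first $n-r$ moves yields an $(n-r)$-move winning strategy for the target $C_s(n-r,k)$.

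The main delicate point is the uniform treatment of the reflection case for symmetry type $(1,n-1)$: although each individual $f_i$ is order-reversing there, the composition $f_1^{-1} \circ f_2$ is still order-preserving, so Lemma \ref{nos} (which is a statement about increasing copies) applies without modification, and the sign of the conclusion is inherited from the sign of $f_i$.
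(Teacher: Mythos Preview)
Your proof is correct and follows essentially the same approach as the paper's: both arguments run a downward induction, observe that Maker's position must admit two distinct one-move completions to a copy of the relevant symmetric set, and then invoke Lemma~\ref{nos} to pin down which two elements are being removed. The only cosmetic differences are that the paper phrases the two completions as a Breaker block $x$ together with Maker's forced alternate $y$ (working with copies of $S$ itself), whereas you parametrize directly by two Maker continuations $y_1,y_2$ and maps $f_i$ into $C_s(m+1,k)$; your explicit remark that $f_1^{-1}\circ f_2$ remains increasing in the reflection case is something the paper leaves implicit.
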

\begin{proof}
Note the assumption that $S$ is symmetric, which holds by Lemma \ref{nec}. Suppose $S$ has type $(i,j)$. It suffices to prove the case $r=1$. The result follows by induction. Suppose Maker builds $R$ on their $(n-1)$th move; we may assume that $|R|> 3$ since sets of size 3 are symmetric of every possible type. We may assume that Breaker would pick $x$ such that $R\cup\{x\}$ is a copy of $S$. Maker then plays $y\neq x$ such that $R\cup\{y\}$ is also a copy of $S$. Thus, $R\cup\{y\}$ is a copy of $R\cup\{x\}$. That is, there exists an increasing affine function $f:\mathbb{Q}\to\mathbb{Q}$ such that
$$f(R)\cup\{f(x)\}=R\cup\{y\}=S'$$
for some copy $S'=\{s'_1,s'_2,\dots,s'_n\}$ of $S$, indexed in increasing order. Note that $f(x)\neq y$, otherwise $f(R)=R$ which is impossible if $|R|>1$. It follows that $S'\setminus\{y\}$ is a distinct copy of $S'\setminus\{f(x)\}$. This means that $S'$ is symmetric of type $(k,l)$ where $f(x)=s'_k$ and $y=s'_l$ (or vice versa). By Lemma \ref{nos}, $(k,l)=(i,j)$. Therefore, $R=S'\setminus\{s'_j\}$. By the comments preceding Lemma \ref{nos}, we are done. 
\end{proof}
\begin{theorem}\label{gab}
    If $n\geq 5$, then Maker does not have an $n$-move strategy.
\end{theorem}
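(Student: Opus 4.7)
The plan is to invoke Lemma \ref{han} to reduce the general case to a rigid five-move subproblem for $C_s(5,k)$, then exhibit an explicit Breaker strategy that blocks every possible completion by the fifth move.

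Suppose for contradiction that Maker has an $n$-move strategy for some $n \geq 5$. By Lemma \ref{han} applied with $r = n-5$, on Maker's fifth move they must build a copy or reflection of $C_s(5,k)$ for some $s \in \{1,2\}$ and $k>1$ determined by $S$. The map $x \mapsto -x$ converts a strategy for a reflected target into a strategy for the copy, so without loss of generality we may assume Maker is aiming to build a copy of $C_s(5,k)$. Applying Lemma \ref{han} again at $r = 2$, Maker's third-move set $M_3$ is a copy of $C_s(3,k)$; write $M_3 = \alpha \cdot C_s(3,k) + \beta$. A direct inspection of the definitions shows that any copy of $C_s(m,k)$ with parameters $(\alpha,\beta)$ has exactly two extensions to a copy of $C_s(m+1,k)$: a ``high'' extension adding the largest element of the target copy and a ``low'' extension adding $\alpha/k+\beta$. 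Crucially, taking the low extension updates the parameters to $(\alpha/k, \beta)$, which leaves the position of the high extension unchanged.

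Let $H$ be the high extension of $M_3$, $L = \alpha/k + \beta$ the low extension, $H'$ the high extension of $M_3 \cup \{H\}$, and $L' = \alpha/k^2 + \beta$ the low extension of $M_3 \cup \{L\}$. These four elements are pairwise distinct because $k>1$ and $\alpha \neq 0$. Breaker's strategy: play arbitrary legal filler on B1 and B2, play $H$ on B3, and play $L'$ on B4. After B3, Maker's only available extension of $M_3$ is $L$, forcing $M_4 = M_3 \cup \{L\}$. By the invariant above, the two extensions of $M_4$ are $H$ (still blocked from B3) and $L'$; after B4 blocks $L'$, both extensions of $M_4$ are blocked, so $M_5$ cannot be a copy of $C_s(5,k)$ containing $M_4$. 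This contradicts Lemma \ref{han}.

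Degenerate subcases arise when Breaker's B1 or B2 happens to have already played some element of $\{H,L,H',L'\}$; in each such case, whichever of $\{H,L\}$ Maker is forced to play at M4, Breaker reroutes its remaining targeted move to block the corresponding second-level extension on that side, and the same contradiction ensues. The main obstacle is verifying the ``preservation of the opposite extension'' invariant which makes the B3 block remain effective after Maker's forced M4; this reduces to the explicit rescaling $(\alpha,\beta) \mapsto (\alpha/k,\beta)$ under a low extension, together with the analogous preservation observation for the high extension.
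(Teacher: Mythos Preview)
Your argument is essentially the paper's own: reduce via Lemma~\ref{han} to a five-move game for $C_s(5,k)$, observe that from any copy of $C_s(m,k)$ with $m\ge 3$ there are exactly two one-point extensions to a copy of $C_s(m+1,k)$, and have Breaker block one of them on B3 so that after Maker's forced M4 the already-blocked element reappears as one of the two next-level extensions, letting Breaker close out on B4. You block the high end first while the paper blocks the low end first, but the persistence mechanism (your parameter update $(\alpha,\beta)\mapsto(\alpha/k,\beta)$ under a low extension) is exactly what the paper is using when it notes that $f(1)$ is still one of the two completions after Maker is pushed to $f(k^4)$.

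One genuine but easily repaired omission: your appeal to Lemma~\ref{han} and the formula $L=\alpha/k+\beta$ implicitly assume $k>1$, i.e.\ that $S$ is not an arithmetic progression; the representative copies $C_s(n,k)$ never cover the AP case. The paper handles this separately with the parenthetical ``If $S$ is an arithmetic sequence, replace $k^i$ with $i$.'' Your extension-counting and persistence argument goes through verbatim for APs once you replace the multiplicative rescaling by the additive one (the low extension of an $m$-term AP with step $d$ is $a-d$, and after taking it the high extension is still $a+md$), so this is a one-line patch rather than a structural problem.
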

\begin{proof}
We only need to prove the case $n=5$; if Maker has an $n$-move strategy and $n>5$, then Maker would have a 5-move strategy by the last sentence of Lemma \ref{han}. Suppose for the sake of a contradiction that Maker has a $5$-move strategy. We break the argument into cases by Lemma \ref{types}.

Suppose $S$ has symmetry type $(1,5)$ and is not an arithmetic sequence. Again by Lemma \ref{han}, Maker builds $f(\{k,k^2,k^3\})$ on their third move and must build $g(\{1,k,k^2,k^3\})$ on their fourth move, where $f,g:\mathbb{Q}\to\mathbb{Q}$ are increasing affine functions and $k>1$. We claim that the only way to do this is for Maker to select $f(1)$ or $f(k^4)$ on their fourth move. Let $x\in\mathbb{Q}$ such that $$f(\{k,k^2,k^3\}\cup\{x\})=g(\{1,k,k^2,k^3\}).$$ If $x<k$ then $x=1$, otherwise $g^{-1}\circ f$ is not the identity but fixes $k,k^2,$ and $k^3$. Similarly, if $x>k^3$ then $x=k^4$. To exclude the remaining possibilities, note:
\begin{itemize}
    \item if $k<x<k^2$ then $\{1,k,x,k^2,k^3\}$ is symmetric of type $(1,3)$, and
    \item if $k^2<x<k^3$ then $\{k,k^2,x,k^3,k^4\}$ is symmetric of type $(3,5)$.
\end{itemize}
Neither of the above is possible (Lemma \ref{types}), which proves the claim.
    Now, if Breaker selects $f(1)$ on their third move, then Maker is forced to select $f(k^4)$ on their fourth move.
    
    Maker has now built $f(\{k,k^2,k^3,k^4\})$ and must on their fifth move build a copy of $S$. More explicitly, Maker must find $y\in\mathbb Q$ such that for some increasing affine function $h:\mathbb Q\to\mathbb Q$, $$f(\{k,k^2,k^3,k^4\}\cup\{y\})=h(\{1,k,k^2,k^3,k^4\}).$$ Similar to the argument above, we claim that either $y=1$ or $y=k^5$ by the following: \begin{itemize}
        \item if $k<y<k^2$, then $\{1,k,y,k^2,k^3,k^4\}$ is symmetric of type $(1,3)$;
        \item if $k^2<y<k^3$, then $\{1,k,k^2,y,k^3,k^4\}$ is symmetric of type $(1,4)$;
        \item if $k^3<y<k^4$, then $\{k,k^2,k^3,y,k^4,k^5\}$ is symmetric of type $(4,6)$.
    \end{itemize} 
    None of the above are possible. Thus, the only way for Maker to build a copy of $\{1,k,\dots,k^4\}$ is to select $f(1)$ or $f(k^5)$. But $f(1)$ has already been selected and Breaker can select $f(k^5)$, stopping the win. If $S$ is an arithmetic sequence, replace $k^i$ with $i$. The arguments for the remaining cases are similar to the above and we state them with less detail.

    Suppose $S$ has symmetry type $(2,5)$. By Lemma \ref{han}, Maker builds $f(\{0,1,k\})$ on on their third move and must build a copy of $\{0,1,k,k^2\}$ on their fourth move, where $f:\mathbb{Q}\to\mathbb{Q}$ is an increasing affine function. The only way for Maker to do this is for Maker to select $f(\frac{1}{k})$ or $f(k^2)$ on their fourth move. If Breaker selects $f(\frac{1}{k})$ on their fourth move, then Maker is forced to select $f(k^3)$ on their fourth move. Now Breaker can select $f(k^4)$ and stop the win. If $S$ has symmetry type $(1,4)$, a similar argument holds with $f$ being a decreasing affine function.
\end{proof}

\section{Sets of Size Four}\label{sec:four}
In the case where $n=4$, we can find fast winning strategies for any $S$. We achieve this by explicit construction of trees that satisfy the properties of Proposition \ref{prop:tree}. We have the following results.
\begin{theorem}\label{4}
    Maker has a $4$-move strategy if and only if $S$ is symmetric.
\end{theorem}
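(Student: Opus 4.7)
The plan is to prove both directions separately. The forward direction ($4$-move strategy implies $S$ symmetric) is Lemma \ref{nec}, so the content is the converse. By Lemmas \ref{types}, \ref{canon}, and \ref{canon2}, any symmetric 4-element subset is, up to copy, one of four types: an arithmetic progression; a copy of $C_1(4,k)=\{1,k,k^2,k^3\}$ for some rational $k>1$; a copy of $C_2(4,k)=\{0,1,k,k^2\}$ for some rational $k>1$; or a reflection of $C_2(4,k)$. Proposition 1 reduces the task of exhibiting a $4$-move strategy to constructing a depth-$3$ tree of distinct rationals in which every non-root internal vertex has out-degree $\geq 2$ and every root-to-leaf branch contains a copy of $S$. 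So the plan is to write down an explicit such tree for each family.

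The arithmetic case is supplied by Example \ref{1234}. For a copy of $C_1(4,k)$ with $k>1$, I propose the tree rooted at $0$, with unique child $1$, grandchildren $k+1$ and $\tfrac{1}{k+1}$, leaves $-\tfrac{1}{k}$ and $k^2+k+1$ below $k+1$, and leaves $-\tfrac{1}{k(k+1)}$ and $\tfrac{k^2+k+1}{k+1}$ below $\tfrac{1}{k+1}$. For a copy of $C_2(4,k)$ with $k>1$, I propose the tree rooted at $0$, with child $1$, grandchildren $k$ and $-\tfrac{1}{k-1}$, leaves $\tfrac{1}{k}$ and $k^2$ below $k$, and leaves $-\tfrac{1}{k}$ and $k+1$ below $-\tfrac{1}{k-1}$. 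The $(1,3)$ case is then handled by negating every vertex of the $C_2(4,k)$ tree, using the sign-flip observation preceding Lemma \ref{nos}.

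For each tree, verification splits into two routine pieces. First, each of the four root-to-leaf quadruples must be a copy of $C_s(4,k)$; since $C_1(4,k)$ and $C_2(4,k)$ are characterized (after fixing the first two elements) by the successive differences forming a geometric sequence with common ratio $k$, this comes down to four short computations per tree. Second, all eight rational vertices must be pairwise distinct for every rational $k>1$; this reduces to eliminating a short list of polynomial coincidences in $k$.

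The main obstacle I expect is combinatorial rather than conceptual: choosing the grandchildren so that \emph{both} possible completions into a copy of $S$ yield leaf values that do not coincide with any ancestor or sibling, uniformly in $k$. For example, for $C_2(4,k)$ the natural-looking choice of grandchildren $k$ and $\tfrac{1}{k}$ is doomed because the two completions of $\{0,1,k\}$ are $\{k^2,\tfrac{1}{k}\}$, which forces the leaf $\tfrac{1}{k}$ to collide with the other grandchild; replacing $\tfrac{1}{k}$ by the third admissible third move $-\tfrac{1}{k-1}$ (a reflection-type copy of $\{0,1,k\}$) fixes this. Once the trees are pinned down, the only nontrivial distinctness checks are coincidences such as $k^2=k+1$ or $k^3=1$, which have no rational solution with $k>1$ (the first solution is the golden ratio), and Proposition 1 then delivers the $4$-move strategy.
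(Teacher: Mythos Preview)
Your proposal is correct and essentially the same as the paper's proof. Your $(1,4)$ tree is identical to the paper's (the paper just absorbs the arithmetic case as $k=1$, which reproduces Example~\ref{1234}); your $(2,4)$ tree differs only cosmetically, using the grandchild $k$ with leaves $\tfrac{1}{k},k^2$ where the paper uses the grandchild $\tfrac{1}{k}$ with leaves $\tfrac{1}{k^2},k$, and the $(1,3)$ case is handled by negation in both.
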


\begin{theorem}\label{5}
    Maker always has a $5$-move strategy. 
\end{theorem}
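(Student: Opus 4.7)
The strategy is to apply Proposition~1 by constructing, for each $S$ with $|S|=4$, a winning tree of depth at most~$4$. If $S$ is symmetric, Theorem~\ref{4} already provides a depth-$3$ tree (yielding a $4$-move win), so we may restrict attention to non-symmetric~$S$.

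After applying an increasing affine map, we may normalize $S = \{0,1,a,b\}$ with $1<a<b$. Taking Example~\ref{213} as the model, I would let Maker's first move be $v_0 = 0$ and his second move be a $v_1$ chosen so that $\{v_0, v_1\}$ is a common subset of several distinct copies of $S$ (each unordered pair $s_i<s_j$ determines one such copy via the unique increasing affine map sending $\{s_i,s_j\}$ to $\{v_0,v_1\}$). For Maker's third move I would pick one of two options, each lying in at least two distinct copies of $S$ extending $\{v_0,v_1\}$, so that at depth~$2$ the tree has out-degree~$2$. After Maker's third move, at least one copy is one step from completion; Breaker blocks it, and Maker either completes a second available copy at once (a leaf at depth~$3$) or moves to a depth-$3$ vertex which itself has two completion children, each completing a distinct copy (leaves at depth~$4$). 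The structure mimics the ``double threat'' pattern of Example~\ref{213}.

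The main obstacle is to verify this scheme uniformly over all non-symmetric~$S$. A priori, for certain special $(a,b)$ two intended branches might coincide, or a candidate fourth-move might fail to belong to any copy of $S$ compatible with the earlier choices. However, non-symmetry -- equivalently, by Lemmas~\ref{canon}, \ref{canon2}, and~\ref{nos}, that $S$ is not a copy or reflection of any $C_s(4,k)$ -- is precisely what forces the six copies of $S$ through a chosen pair to be pairwise distinct and prevents the collapse of branches at deeper levels. I therefore expect the proof to proceed by exhibiting one (or a small number of) parametric tree templates with vertex labels given as rational functions of $a$ and $b$, together with a finite check that no denominator vanishes and no two vertices coincide when $S$ is non-symmetric. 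Any boundary cases where parameters would force $S$ into symmetric form are absorbed by Theorem~\ref{4}.
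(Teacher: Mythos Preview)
Your plan matches the paper's approach: exhibit a parametric tree of depth~$4$ with vertices given as rational functions in the parameters of $S$, then verify distinctness. The gap is in your assertion that non-symmetry alone ``prevents the collapse of branches at deeper levels'' and that all boundary cases are absorbed by Theorem~\ref{4}. This turns out to be false, and the verification is where the actual content of the proof lies.

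Concretely, the paper writes $S=\{0,x,x+y,x+y+z\}$ and gives a specific $12$-vertex tree. Most coincidences among vertices are indeed ruled out by sign considerations or do force symmetry, as you anticipate. But three coincidence relations $g_1,g_2,g_3$ survive and admit positive solutions corresponding to \emph{non-symmetric} $S$. To cover these, the paper introduces a second template --- the same tree built for the reflection $S'$ obtained by swapping $x\leftrightarrow z$ --- and argues that both templates fail simultaneously only when some system $g_i=\varphi(g_j)=0$ holds. A Gr\"obner-basis analysis of these six systems then yields exactly one positive rational projective solution, $(x,y,z)$ proportional to $(3,1,2)$, for which $S'$ is a copy of $\{0,2,3,6\}$. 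This case is genuinely sporadic and is handled by the ad~hoc construction of Example~\ref{213}, not by Theorem~\ref{4}. So the ``finite check'' you allude to requires two templates plus one exceptional tree, and that exceptional case is precisely what your sketch does not account for.
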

\begin{proof}[Proof of Theorem \ref{4}]
    One direction follows from Lemma \ref{nec}. For the other direction, let $S$ be symmetric. If $S$ has type $(1,4)$, by (the proof of) Lemma \ref{canon}, we can take $\{0,1,1+k,1+k+k^2\}$ for some $k> 1$ to be a copy of $S$. Figure~\ref{fig:14} displays a tree giving a winning strategy.
The vertices are distinct since
$$-\frac{1}{k+k^2}<-\frac{1}{k}<0<\frac{1}{1+k}<1<\frac{1+k+k^2}{1+k}<1+k<1+k+k^2.$$ Suppose $S$ has type $(2,4)$.
Figure~\ref{fig:24} displays a tree giving a winning strategy.
The vertices are distinct since
$$-\frac{1}{k-1}<-\frac{1}{k}<0<\frac{1}{k^2}<\frac{1}{k}<1<k<1+k.$$
If $S$ has type $(1,3)$, we may take the tree of Figure~\ref{fig:24} and multiply through by $-1$.
\end{proof}

\begin{figure}
    \centering
     \begin{tikzpicture}[scale=0.9]
  \node(0) at (0,1) {$0$};
  \node (1) at (0,0) {$1$};
  \node (c) at (3,-1.5) {$1+k$};
  \node (d) at (-3,-1.5) {$\frac{1}{1+k}$};
  \node (a) at (-1.5,-3) {$\frac{1+k+k^2}{1+k}$};
  \node (b) at (-4.5,-3) {$-\frac{1}{k+k^2}$};
  \node (f) at (4.5,-3) {$1+k+k^2$};
  \node (g) at (1.5,-3) {$-\frac{1}{k}$};
  \draw[->] (0) edge (1) (1) edge (c) (1) edge (d) (c) edge (f) (c) edge (g) (d) edge (a) (d) edge (b);
\end{tikzpicture}
    \caption{A tree giving a winning strategy for symmetric sets of type $(1,4)$.}
    \label{fig:14}
\vspace{9.9mm}
    \centering
     \begin{tikzpicture}[scale=0.9]
  \node (0) at (0,1) {$0$};
  \node (1) at (0,0) {$1$};
  \node (c) at (3,-1.5) {$-\frac{1}{k-1}$};
  \node (d) at (-3,-1.5) {$\frac{1}{k}$};
  \node (a) at (-4.5,-3) {$\frac{1}{k^2}$};
  \node (b) at (-1.5,-3) {$k$};
  \node (f) at (4.5,-3) {$1+k$};
  \node (g) at (1.5,-3) {$-\frac{1}{k}$};
  \draw[->] (0) edge (1) (1) edge (c) (1) edge (d) (c) edge (f) (c) edge (g) (d) edge (a) (d) edge (b);
\end{tikzpicture}
    \caption{A tree giving a winning strategy for symmetric sets of type $(2,4)$ or $(1,3)$.}
    \label{fig:24}
\vspace{9.9mm}
    \centering
\begin{tikzpicture}[scale=0.9]
  \node (0) at (0,1) {$0$};
  \node (1) at (0,0) {$1$};
  \node (h) at (-3,-1.5) {$f_0$};
  \node (j) at (3,-1.5) {$f_1$};
  \node (k) at (-5,-1.5) {$f_{00}$};
  \node (l) at (5,-1.5) {$f_{10}$};
  \node (c) at (3,-3) {$f_{11}$};
  \node (d) at (-3,-3) {$f_{01}$};
  \node (a) at (-4.5,-4.5) {$f_{010}$};
  \node (b) at (-1.5,-4.5) {$f_{011}$};
  \node (f) at (1.5,-4.5) {$f_{110}$};
  \node (g) at (4.5,-4.5) {$f_{111}$};
  \draw[->] (0) edge (1) (1) edge (h) (h) edge (d)  (1) edge (j) (j) edge (c) (j) edge (l) (h) edge (k) (c) edge (f) (c) edge (g) (d) edge (a) (d) edge (b); 
\end{tikzpicture}
    \caption{A tree giving a winning strategy for sets of size four. The vertices are rational functions defined in Table~\ref{tab:polys}.}
    \label{fig:size4}
\end{figure}

\begin{proof}[Proof of Theorem \ref{5}]
Let $S=\{0,x,x+y,x+y+z\}$. Consider the tree $T$ from Figure~\ref{fig:size4}, whose vertices are defined in Table~\ref{tab:polys}.
Note that
\begin{align*}
    S &= \frac{xz}{x+y+z}\{0,f_0,f_{01},f_{010}\}+x+y\\
    &=\frac{xz}{y+z}\{1,f_0,f_{01},f_{011}\}-\frac{xz}{y+z}+x+y\\
    &=\frac{xy+y^2+xz+yz}{x+y+z}\{0,f_1,f_{11},f_{110}\}+x\\
    &=\frac{xy+y^2+xz+yz}{z}\{1,f_1,f_{11},f_{111}\}-\frac{xy+y^2+yz}{z}.
\end{align*}
If all of the vertices of $T$ are distinct, then Maker has a 5-move strategy. Unfortunately, this is not always the case. However, we will show that there still exist 5-move strategies for those cases missed by the tree.

Consider each vertex of $T$ to be a rational function in the differences $x,y,z$ defining $S$. Let $\mathbb{F}$ be the field of rational functions in $x,y,z$ and let $\Phi:\mathbb{F}\to\mathbb{F}$ be the automorphism that swaps the variables $x$ and $z$. That is, \[\Phi(f(x,y,z))=f(z,y,x),\quad \text{for all $f\in\mathbb{F}$.}\] To understand the meaning of this automorphism, notice that \[S':=\{0,z,z+y,z+y+x\}=-S+x+y+z\] is a reflection of $S$. The vertices of $T$ obey the following inequalities:
\begin{gather*} 
f_{010}<f_{01}<0<1<f_{00}<f_0,\quad f_{01}<f_{011}<1, \\
f_{110}<f_0<f_{11}<1<f_{111}<f_1, \quad 0<f_{10}<1.
\end{gather*}

\begin{table}
\begin{align*}
    f_0 &= \frac{x+y+z}{x}, & f_1 &= \frac{x+y+z}{x+y}, \\
    f_{00}&=\frac{x+y}{x}, & f_{10}&=\frac{x}{x+y},\\
    f_{01}&=\frac{-xy-y^2-yz}{xz}, & f_{11} &=\frac{yz+xy+y^2}{xy+y^2+xz+yz},\\
    f_{010}&=\frac{-x^2-2xy-y^2-yz-xz}{xz}, & f_{110}&=\frac{-x^2-xy-xz}{xy+y^2+xz+yz},\\
    f_{011}&=\frac{-y^2+xz-yz}{xz}, & f_{111}&=\frac{2yz+xy+y^2+xz}{xy+y^2+xz+yz}.
\end{align*}
    \caption{The rational functions from the tree in Figure~\ref{fig:size4}.}
    \label{tab:polys}
\end{table}

If $f_{00}=f_{1}$, then setting $k=f_{00}=f_{1}$ we have that $S=\{0,x,kx,k^2 x\}$ is symmetric of type $(2,4)$. Next, from the equation
$$xzf_{011}=(xy+y^2+xz+yz)(f_{10}-f_{11})$$
we have $f_{011}=0$ if and only if $f_{10}=f_{11}$. But $f_{011}=0$ implies that $\Phi(f_{00})=\Phi(f_1)$. Moreover, by the first sentence of this paragraph, this implies that $S'$ is symmetric of type $(2,4)$ and $S$ is symmetric of type $(1,3)$. We conclude that if $f_{00}=f_1$ or $f_{011}=0$ or $f_{10}=f_{11}$, then $S$ is symmetric. In these cases Maker has a 4-move strategy (Theorem \ref{4}).

Among the relations $g=p-q$ left to consider, where $p$ and $q$ are distinct (as elements of $\mathbb{F}$) vertices of $T$, most have no positive solutions to $g=0$ since the coefficients of the numerator of the rational function $g$ have the same sign. There are three relations that cannot be ruled out by this reasoning:
\begin{equation*}
    g_1 = f_{11}-f_{011},\quad g_2 = f_{110}-f_{01}, \quad g_3 = f_{110}-f_{011}.
\end{equation*}
The significance of $g_1,g_2,$ and $g_3$ is that they represent exactly those pairs of vertices that could be equal for some choice of positive $(x,y,z)\in\mathbb{Q}^3$.

Now, observe that if $T$ gives a $5$-move strategy for $S'$, then the tree $\Phi(T)$ obtained by applying $\Phi$ to every vertex of $T$ gives a $5$-move strategy for $S$ (by symmetry, this remains true after swapping $S$ and $S'$). Therefore, Maker has a 5-move strategy if the tree provides one for $S$ or $S'$. The tree fails for both $S$ and $S'$ only if $(x,y,z)$ vanishes on $g_i$ and on $\Phi(g_j)$ for some pair of indices $1\leq i\leq j\leq 3$. For each pair $(i,j)$ we look for rational points on the intersection of the curves $g_i=0$ and $\Phi(g_j)=0$, hoping not to find any.

\begin{table}
\centering
\renewcommand{\arraystretch}{1.15}
\begin{tabular}{c|c}
$(i,j)$ & $p(x)$\\
\hline
$(1,1)$ & $x^5-4x^3-6x^2-4x-1$\\
$(1,3)$ & $x^7-4x^6-12x^5-4x^4+10x^3+11x^2+5x+1$\\
$(2,3)$ & $x^8 + x^7 - 7x^6 - 15x^5 - 3x^4 + 19x^3 + 23x^2 + 11x + 2$\\
$(3,3)$ & $3x^8+17x^7+23x^6-6x^5-34x^4-22x^3+2x^2+7x+2$
\end{tabular}
\caption{Some polynomials used in the proof of Theorem \ref{5}.}
    \label{tab:polys2}
\end{table}

First consider the case $(i,j) = (2,2)$. This corresponds to the system $g_2=\Phi(g_2)=0$. Notice that these functions are homogeneous: if $f=g_2$ or $f=\Phi(g_2)$ then $f(x,y,z)=f(\lambda x,\lambda y,\lambda z)$ for any constant $\lambda \neq 0$. In particular, if $(x,y,z)$ is a root then so is any multiple $\lambda(x,y,z)$. Set $\lambda=1/y$, or equivalently set $y=1$. Clearing denominators, we get the system of equations $h=\Phi(h)=0$ in $x$ and $z$, where
$$h = -x^3z-x^2z^2+xz^2+x^2+3xz+z^2+2x+2z+1.$$
Now we notice that $xz(1+x+z)(x-z)=\Phi(h)-h=0$. Then since $(x,z)$ lies in the first quadrant of the $xz$-plane, we must have $x-z=0$. Setting $z$ equal to $x$ in $h$, we get
$$-2x^4 + x^3 + 5x^2 + 4x + 1=0.$$
One checks that $\frac{1}{2}$ is not a root of this polynomial. By the rational root theorem, the polynomial has no positive rational roots.

Next consider $(i,j)=(1,2)$, corresponding to the system $g_1=\Phi(g_2)=0$. As before, we set $y=1$ and clear denominators to get the system
\begin{align*}
-x^2z^2+xz^2+2xz+z^2+x+2z+1&=0, \\
-x^2z^2-xz^3+x^2z+x^2+3xz+z^2+2x+2z+1&=0.
\end{align*}
By a Gr\"obner basis computation (refer to \cite{CLO15} more information about Gr\"obner bases) in the ring $(\mathbb{Q}[x])[z]$ we obtain the equivalent system of equations
\begin{align*}
x^7-x^6-5x^5-3x^4&=0, \\
(2x^3+2x^2)z-x^6+5x^4+6x^3+2x^2&=0,\\
z^2 +(-2x^2+2)z+2x^6-2x^5-9x^4-7x^3-3x^2+1&=0.
\end{align*}
From the first equation, if $x> 0$ then $x$ must be a root of the polynomial
$x^3-x^2-5x-3.$ Using the rational root theorem, we find that the only positive rational root of this polynomial is $x=3$, leading to the solution $(x,y,z)=(3,1,2)$. In this case $S'$ is a copy of $\{0,2,3,6\}$. Then Example \ref{0236} shows that Maker has a $5$-move strategy for $S'$, and hence for $S$.

The series of computations for $(i,j)=(1,2)$ can be repeated for all of the remaining tuples. Table \ref{tab:polys2} records for each tuple the univariate polynomial $p$ obtained with the property that $p(x)=0$ whenever a point $(x,1,z)$ in the positive octant solves $g_i=\Phi(g_j)=0$. None of these polynomials have positive rational roots.
\end{proof}

Below is the Macaulay2 \cite{Macaulay2} code used to carry out the computations at the end of the proof of Theorem \ref{5}. The code provided is for the tuple $(i,j)=(1,2)$, but can be easily modified for the other cases.

\begin{verbatim}
F = frac(QQ[x,y,z]);
f01 = (-x*y-y^2-y*z)/(x*z);
f11 = (y*z+x*y+y^2)/(x*y+y^2+x*z+y*z);
f011 = (-y^2+x*z-y*z)/(x*z);
f110 = (-x^2-x*y-x*z)/(x*y+y^2+x*z+y*z);
g = {f11-f011, f110-f01, f110-f011};

R = frac(QQ[x,z]);
dehom = map(R,F,{x,1,z});
Phi = map(R,R,{z,x});

i = 1; j = 2; -- modify code here

p = dehom(g_(i-1)); q = Phi(dehom(g_(j-1)));
d = denominator(p); p = p*d; q = q*d;
I = ideal{p,q}; S = (QQ[x])[z]; I = sub(I,S);

gens gb I
\end{verbatim}

\vskip20pt\noindent {\bf Acknowledgements.} We thank Joe Briggs for suggesting this problem in the Auburn Graduate Student Research Seminar and the other graduate students in the research seminar, particularly Haile Gilroy and Evan Leonard. We thank Dömötör Pálvölgyi for bringing the application of the Hales-Jewett theorem to our attention.

\end{document}